%2multibyte Version: 5.50.0.2890 CodePage: 936
%% NB There should be only one primary classification, and zero or
%more secondary classifications.

\documentclass{amsart}
%%%%%%%%%%%%%%%%%%%%%%%%%%%%%%%%%%%%%%%%%%%%%%%%%%%%%%%%%%%%%%%%%%%%%%%%%%%%%%%%%%%%%%%%%%%%%%%%%%%%%%%%%%%%%%%%%%%%%%%%%%%%%%%%%%%%%%%%%%%%%%%%%%%%%%%%%%%%%%%%%%%%%%%%%%%%%%%%%%%%%%%%%%%%%%%%%%%%%%%%%%%%%%%%%%%%%%%%%%%%%%%%%%%%%%%%%%%%%%%%%%%%%%%%%%%%
\usepackage{amsfonts,amssymb,amsmath,amsthm}
\usepackage{url}
\usepackage{enumerate}

\setcounter{MaxMatrixCols}{10}
%TCIDATA{OutputFilter=LATEX.DLL}
%TCIDATA{Version=5.50.0.2890}
%TCIDATA{Codepage=936}
%TCIDATA{<META NAME="SaveForMode" CONTENT="1">}
%TCIDATA{BibliographyScheme=Manual}
%TCIDATA{LastRevised=Saturday, February 02, 2019 09:06:22}
%TCIDATA{<META NAME="GraphicsSave" CONTENT="32">}
%TCIDATA{Language=American English}

\makeatletter
\@namedef{subjclassname@2010}{  \textup{2010} Mathematics Subject Classification.}
\makeatother

\theoremstyle{definition}

\numberwithin{equation}{section}
\email{hzoubeir2014@gmail.com}
\input{tcilatex}

\begin{document}
\address{ }
\author{Hicham Zoubeir}
\address{Ibn Tofail University, Department of Mathematics,\\
Faculty of Sciences, P. O. B : $133,$ Kenitra, Morocco.}
\title[About a conjecture on difference equations ]{About a conjecture on
difference equations in quasianalytic Carleman classes}

\begin{abstract}
In this paper we consider the difference equation $(E):\overset{q}{\underset{%
j=1}{\sum }}a_{i}(x)\varphi (x+\alpha _{i})=\chi (x)$ where $\alpha
_{1}<...<\alpha _{q}$ \ $(q\geq 3)$ are given real constants, $a_{j}$ $%
(j=1,...,q)$ are given holomorphic functions on some strip $%
%TCIMACRO{\U{211d} }%
%BeginExpansion
\mathbb{R}
%EndExpansion
_{\delta }$ $(\delta >0)$ such that $a_{1}$ and $a_{q}$ are nowhere
vanishing on $%
%TCIMACRO{\U{211d} }%
%BeginExpansion
\mathbb{R}
%EndExpansion
_{\delta },$ and $\chi $ a function which belongs to a quasianalytic
Carleman class $C_{M}\{%
%TCIMACRO{\U{211d} }%
%BeginExpansion
\mathbb{R}
%EndExpansion
\}.$ We prove under a growth condition on the functions $a_{j}$ that the
equation $(E)$ is solvable in the class $C_{M}\{%
%TCIMACRO{\U{211d} }%
%BeginExpansion
\mathbb{R}
%EndExpansion
\}.$
\end{abstract}

\dedicatory{$\emph{This}$ $\emph{modest}$ $\emph{work}$ $\emph{is}$ $\emph{%
dedicated}$ $\emph{to}$ $\emph{the}$ $\emph{memory}$ $\emph{of}$ $\emph{our}$
$\emph{beloved}$ $\emph{master}$ $\emph{Ahmed}$ $\emph{Intissar}$ $\emph{%
(1951-2017),}$ $\emph{a}$ $\emph{distinguished}$ $\emph{professor,a}$ $\emph{%
brilliant}$ $\emph{mathematician,a}$ $\emph{man}$ $\emph{with}$ $\emph{a}$ $%
\emph{golden}$ $\emph{heart.}$\emph{\ }}
\subjclass[2010]{30H05; 30B10; 30D05}
\keywords{Difference equations, quasianalytic Carleman classes.}
\maketitle

\section{Introduction}

In the paper (\cite{BEL}), G. Belitskii, E. M. Dyn'kin and V. Tkachenko have
formulated the following conjecture :

\textbf{"}\textit{Let }$\chi ,\ a_{i},\ i=1,...,q,$\textit{\ be functions in
a Carleman class }$C_{M}(%
%TCIMACRO{\U{211d} }%
%BeginExpansion
\mathbb{R}
%EndExpansion
)$\textit{\ such that }$a_{1}$\textit{and }$a_{q}$\textit{\ are nowhere
vanishing on }$%
%TCIMACRO{\U{211d} }%
%BeginExpansion
\mathbb{R}
%EndExpansion
,$\textit{\ and }$\alpha _{1}<...<\alpha _{q}$\textit{\ some real numbers.
Then the difference equation }$:$%
\begin{equation*}
(E):\overset{q}{\text{ }\underset{j=1}{\sum }}a_{i}(x)\varphi (x+\alpha
_{i})=\chi (x)
\end{equation*}%
\textit{is solvable in the Carleman class }$C_{M}\{%
%TCIMACRO{\U{211d} }%
%BeginExpansion
\mathbb{R}
%EndExpansion
\}."$

In that paper the authors, relying on a result of decomposition in Carleman
classes, have proved the conjecture in the particular cases where the
coefficients $a_{j}$ are constants or when the coefficients are variables
with $q=2.$ They have also suggested that the same method could be used to
show the solvability of the equation $(E)$ in a quasianalytic Carleman class 
$C_{M}\{%
%TCIMACRO{\U{211d} }%
%BeginExpansion
\mathbb{R}
%EndExpansion
\},$ if we assume that the functions $\frac{1}{a_{1}},$ $\frac{1}{a_{q}},$ $%
\frac{a_{2}}{a_{1}},...,\frac{a_{q}}{a_{1}},$ $\frac{a_{1}}{a_{q}},...,$ $%
\frac{a_{q-1}}{a_{q}}$ $(q\geq 3)$ can be continued in some strip $%
%TCIMACRO{\U{211d} }%
%BeginExpansion
\mathbb{R}
%EndExpansion
_{\delta }:=\{z\in 
%TCIMACRO{\U{2102} }%
%BeginExpansion
\mathbb{C}
%EndExpansion
:|\func{Im}(z)|<\delta \}$ as analytic functions increasing on $%
%TCIMACRO{\U{211d} }%
%BeginExpansion
\mathbb{R}
%EndExpansion
_{\delta }$ not too fast at infinity. As an example of such coefficients,
they have mentioned the class of rational functions. In this paper, our
purpose is to give a precise meaning to this assertion, by proving that the
result is true even if the functions $\frac{1}{a_{1}},$ $\frac{1}{a_{q}},$ $%
\frac{a_{2}}{a_{1}},...,\frac{a_{q}}{a_{1}},\frac{a_{1}}{a_{q}},...,\frac{%
a_{q-1}}{a_{q}}$ have more rapide increase at infinity provided that it is
of the form $e^{e^{C|\func{Re}(z\text{ })|}}$ where $C>0$ is a constant.

\section{\textbf{Notations, definitions and statement of the main result}}

We set for every $\rho >0,$ $a\geq 0:$ 
\begin{equation*}
\left \{ 
\begin{array}{c}
%TCIMACRO{\U{211d} }%
%BeginExpansion
\mathbb{R}
%EndExpansion
_{\rho }:=\{z\in 
%TCIMACRO{\U{2102} }%
%BeginExpansion
\mathbb{C}
%EndExpansion
:|\func{Im}(z)|<\rho \},\text{ }%
%TCIMACRO{\U{211d} }%
%BeginExpansion
\mathbb{R}
%EndExpansion
_{\rho }^{\pm }:=\{z\in 
%TCIMACRO{\U{211d} }%
%BeginExpansion
\mathbb{R}
%EndExpansion
_{\rho }:\pm \func{Re}(z)>\rho \} \\ 
%TCIMACRO{\U{211d} }%
%BeginExpansion
\mathbb{R}
%EndExpansion
_{\rho ,a}:=\{z\in 
%TCIMACRO{\U{211d} }%
%BeginExpansion
\mathbb{R}
%EndExpansion
_{\rho }:|\func{Re}(z)|\leq a\} \\ 
\Delta _{\rho }:=\{z\in 
%TCIMACRO{\U{2102} }%
%BeginExpansion
\mathbb{C}
%EndExpansion
:|z|<\rho \},\text{ }\Delta _{\rho }^{\pm }:=\{z\in \Delta _{\rho }:\pm 
\func{Re}(z)\leq 0\} \\ 
\Gamma _{\rho }:=\{z\in 
%TCIMACRO{\U{2102} }%
%BeginExpansion
\mathbb{C}
%EndExpansion
:|z|=\rho \},\  \Gamma _{\rho }^{\pm }:=\{z\in \Gamma _{\rho }:\pm \func{Re}%
(z)\leq 0\}%
\end{array}%
\right. \text{ }
\end{equation*}

For every non empty subset $V$ of $%
%TCIMACRO{\U{2102} }%
%BeginExpansion
\mathbb{C}
%EndExpansion
$ and every $z\in 
%TCIMACRO{\U{2102} }%
%BeginExpansion
\mathbb{C}
%EndExpansion
$ and $n\in 
%TCIMACRO{\U{2115} }%
%BeginExpansion
\mathbb{N}
%EndExpansion
^{\ast }$we set : 
\begin{equation*}
\left \{ 
\begin{array}{c}
V^{\text{ }(0)}:=V,\text{ }V^{\text{ }(n)}:=\{u_{1}+...+u_{n}\text{ }%
:u_{j}\in V,\text{ }j=1,...,n\text{ }\},\text{ }n\geq 1 \\ 
z+V:=\{z+u:u\in V\text{ }\},\text{ }z-V:=\{z-u:u\in V\text{ }\}%
\end{array}%
\right.
\end{equation*}

$dm(\zeta )$ denotes the Lebesgue mesure on $\mathbb{C}.$

Let $S$\ be a nonempty subset of $%
%TCIMACRO{\U{2102} }%
%BeginExpansion
\mathbb{C}
%EndExpansion
.$\ By $O(S$\ $)$\ we denote the set of holomorphic functions on some
neighborhood of $S$.

Let $F:U\subset 
%TCIMACRO{\U{2102} }%
%BeginExpansion
\mathbb{C}
%EndExpansion
\rightarrow 
%TCIMACRO{\U{2102} }%
%BeginExpansion
\mathbb{C}
%EndExpansion
$ be a function of class $C^{1}$ on an open subset $U$ of $%
%TCIMACRO{\U{2102} }%
%BeginExpansion
\mathbb{C}
%EndExpansion
.$ We set for all $z\in U:$%
\begin{equation*}
\overline{\partial }F(z):=\frac{1}{2}[\frac{\partial F}{\partial x}(z)+i%
\text{ }\frac{\partial F}{\partial y}(z)]
\end{equation*}%
$\overline{\partial }$ is called the operator of Cauchy-Riemann.

Let $M:=(M_{n})_{n\in 
%TCIMACRO{\U{2115} }%
%BeginExpansion
\mathbb{N}
%EndExpansion
}$ be a sequence of strictly positive numbers. Let $M:=(M_{n})_{n\text{ }%
\geq 0\text{ }}$be a sequence of strictly positive real numbers. The
Carleman class $C_{M}$ $\{%
%TCIMACRO{\U{211d} }%
%BeginExpansion
\mathbb{R}
%EndExpansion
\}$ is the set of all functions $f:W\rightarrow 
%TCIMACRO{\U{2102} }%
%BeginExpansion
\mathbb{C}
%EndExpansion
$ of class $C^{\infty }$such that%
\begin{equation*}
||f^{\text{ }(n)}||_{\infty ,I}\leq C_{I}\rho _{I}^{n}M_{n},\text{ }n\in 
%TCIMACRO{\U{2115}}%
%BeginExpansion
\mathbb{N}%
%EndExpansion
\end{equation*}%
for every compact interval $I$ $\ $contained in $%
%TCIMACRO{\U{211d} }%
%BeginExpansion
\mathbb{R}
%EndExpansion
$ with some constants $C_{I},\rho _{I}>0.$

The Carleman class $C_{M}$ $\{%
%TCIMACRO{\U{211d} }%
%BeginExpansion
\mathbb{R}
%EndExpansion
\}$ is said to be quasinalytic if every function $f\in C_{M}$ $\{%
%TCIMACRO{\U{211d} }%
%BeginExpansion
\mathbb{R}
%EndExpansion
\}$ such that $f^{\text{ }(n)}(u)=0$ for some $u$ $\in 
%TCIMACRO{\U{211d} }%
%BeginExpansion
\mathbb{R}
%EndExpansion
$ $\ $and every $n\in 
%TCIMACRO{\U{2115} }%
%BeginExpansion
\mathbb{N}
%EndExpansion
$ is identically equal to $0.$

The Carleman class $C_{M}$ $\{%
%TCIMACRO{\U{211d} }%
%BeginExpansion
\mathbb{R}
%EndExpansion
\}$ is called regular if the following conditions hold :%
\begin{equation*}
\left \{ 
\begin{array}{c}
\left( \frac{M_{n+1}}{(n+1)!}\right) ^{2}\leq \frac{M_{n}}{n!}\frac{M_{n+2}}{%
(n+2)!},\text{ }n\in 
%TCIMACRO{\U{2115} }%
%BeginExpansion
\mathbb{N}
%EndExpansion
\\ 
\underset{n\in 
%TCIMACRO{\U{2115} }%
%BeginExpansion
\mathbb{N}
%EndExpansion
}{\sup }\left( \frac{M_{n+1}}{(n+1)M_{n}}\right) ^{\frac{1}{n}}<+\infty \\ 
\underset{n\rightarrow +\infty }{\lim }M_{n}^{\frac{1}{n}}=+\infty%
\end{array}%
\right.
\end{equation*}

To the Carleman $C_{M}$ $\{%
%TCIMACRO{\U{211d} }%
%BeginExpansion
\mathbb{R}
%EndExpansion
\}$ we associate its weight $H_{M}$ defined by the following relation :%
\begin{equation*}
H_{M}(x):=\underset{n\in 
%TCIMACRO{\U{2115} }%
%BeginExpansion
\mathbb{N}
%EndExpansion
}{\inf }\left( \frac{M_{n}}{n!}x^{n}\right) ,\text{ }x>0
\end{equation*}

In this paper the following result will play a crucial role.

\begin{theorem}
$($\cite{DYN}$)$ \textit{We assume that the Carleman class }$C_{M}\{%
%TCIMACRO{\U{211d} }%
%BeginExpansion
\mathbb{R}
%EndExpansion
\}$\textit{\ is regular}. \textit{A function }$f:%
%TCIMACRO{\U{211d} }%
%BeginExpansion
\mathbb{R}
%EndExpansion
\rightarrow 
%TCIMACRO{\U{2102} }%
%BeginExpansion
\mathbb{C}
%EndExpansion
$\textit{\ belongs to }$C_{M}\{%
%TCIMACRO{\U{211d} }%
%BeginExpansion
\mathbb{R}
%EndExpansion
\}$\textit{\ if and only if there exists for every compact interval }$I$%
\textit{\ of }$%
%TCIMACRO{\U{211d} }%
%BeginExpansion
\mathbb{R}
%EndExpansion
$\textit{\ a compactly supported function }$F_{I}:%
%TCIMACRO{\U{2102} }%
%BeginExpansion
\mathbb{C}
%EndExpansion
\rightarrow 
%TCIMACRO{\U{2102} }%
%BeginExpansion
\mathbb{C}
%EndExpansion
$\textit{\ of class }$C^{1}$\textit{\ such that }$F_{I}$\textit{\ is an
extension of }$f$\textit{\ and satisfies the following estimate }$:$\textit{%
\ }%
\begin{equation*}
|\overline{\partial }F_{I}(z)|\leq A_{I}H_{M}(B_{I}|\func{Im}(z)|),\text{ }%
z\in 
%TCIMACRO{\U{2102} }%
%BeginExpansion
\mathbb{C}
%EndExpansion
\end{equation*}%
\textit{where }$A_{I},$\textit{\ }$B_{I}>0$\textit{\ are constants.}
\end{theorem}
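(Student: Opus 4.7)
The plan is to prove the two implications separately and to exploit in both directions the Cauchy--Pompeiu (a.k.a. Cauchy--Green) representation
\begin{equation*}
F_{I}(z) = -\frac{1}{\pi}\int_{\mathbb{C}}\frac{\overline{\partial}F_{I}(\zeta)}{\zeta - z}\,dm(\zeta),\qquad z\in\mathbb{C},
\end{equation*}
which holds for every compactly supported $F_{I}\in C^{1}(\mathbb{C})$.

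For the sufficiency direction (the existence of the extensions forces $f\in C_{M}\{\mathbb{R}\}$), I would fix a compact interval $I$ and differentiate the representation formula $n$ times in the real direction. For $x_{0}$ in the interior of $I$ one obtains
\begin{equation*}
f^{(n)}(x_{0}) = -\frac{n!}{\pi}\int_{\mathbb{C}}\frac{\overline{\partial}F_{I}(\zeta)}{(\zeta-x_{0})^{n+1}}\,dm(\zeta),
\end{equation*}
the integral being absolutely convergent because the hypothesis $|\overline{\partial}F_{I}(\zeta)|\leq A_{I} H_{M}(B_{I}|\operatorname{Im}(\zeta)|)$ forces $\overline{\partial}F_{I}$ to vanish on $\mathbb{R}$ faster than any polynomial. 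Switching to polar coordinates $\zeta = x_{0} + re^{i\theta}$ and using the elementary inequality $H_{M}(t)\leq (M_{n+1}/(n+1)!)\,t^{n+1}$, one arrives at an estimate proportional to $B_{I}^{n+1}\,M_{n+1}/(n+1)$, which by the second regularity axiom $\sup_{n}(M_{n+1}/((n+1)M_{n}))^{1/n}<+\infty$ is in turn bounded by a constant times $\rho_{I}^{\,n}\,M_{n}$. This is precisely the Carleman estimate for $f^{(n)}$ on $I$.

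For the necessity direction I would reproduce the classical Dyn'kin construction of an almost-analytic extension. Given $f\in C_{M}\{\mathbb{R}\}$, fix an interval $J$ strictly containing $I$, a smooth cut-off $\phi(x)$ supported in $J$ and equal to $1$ on $I$, and a cut-off $\psi(y)$ supported in a small neighbourhood of $0$. For $z=x+iy$ define
\begin{equation*}
\widetilde{F}_{I}(z) = \phi(x)\,\psi(y)\,\sum_{k=0}^{N(|y|)}\frac{f^{(k)}(x)}{k!}(iy)^{k},
\end{equation*}
where the truncation index $N=N(|y|)$ is the integer realising, up to a uniform factor, the infimum in the definition of $H_{M}(B|y|)$. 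A direct calculation of $\overline{\partial}\widetilde{F}_{I}$ yields three families of terms: a core term proportional to $f^{(N+1)}(x)(iy)^{N}/N!$, which by the choice of $N$ meets the required bound $A_{I}H_{M}(B_{I}|y|)$; and two families coming from $\phi'(x)$ and $\psi'(y)$, which are supported away from $I\times\{0\}$ and thus contribute only a rapidly decaying, hence controlled, remainder.

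I expect the main obstacle to be the careful calibration of the truncation index $N(|y|)$. One must ensure both that the Taylor remainder is genuinely controlled by $H_{M}(B|y|)$ and that the integer jumps of $N(|y|)$ as $y$ varies do not spoil the $C^{1}$ regularity of $\widetilde{F}_{I}$. This is exactly where the regularity assumptions on $M$, in particular the log-convexity of $M_{n}/n!$, intervene decisively, since they guarantee that $H_{M}$ behaves smoothly enough for the discrete choice $N(|y|)$ to be replaced (via convolution or summation by parts) by a genuinely differentiable interpolant without degrading the estimate.
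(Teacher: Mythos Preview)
The paper does not give a proof of this statement: it is quoted as a known result of Dyn'kin with the citation \cite{DYN} and is used later as a black box, so there is no argument in the paper to compare your proposal against.

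That said, your sketch is a faithful outline of Dyn'kin's original proof. The sufficiency direction via differentiation of the Cauchy--Pompeiu representation is correct; the key step you indicate, bounding $H_{M}(B_{I}|\operatorname{Im}\zeta|)\le \tfrac{M_{n+1}}{(n+1)!}(B_{I}|\zeta-x_{0}|)^{n+1}$ and then invoking the second regularity axiom to pass from $M_{n+1}/(n+1)$ to $\rho_{I}^{\,n}M_{n}$, goes through exactly as you say once one remembers that $F_{I}$ has compact support. For the necessity direction, the truncated Taylor expansion with adaptive index $N(|y|)$ is the classical almost-analytic extension, and you have correctly isolated the one genuine difficulty (ensuring $C^{1}$ regularity across the integer jumps of $N$) together with the remedy (log-convexity of $M_{n}/n!$ lets one smooth the truncation without loss). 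Nothing is missing conceptually; what remains is the standard bookkeeping of Dyn'kin's paper.
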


Throughout this paper, we assume that the Carleman class $C_{M}\{%
%TCIMACRO{\U{211d} }%
%BeginExpansion
\mathbb{R}
%EndExpansion
\}$ is regular and quasianalytic.

Our main result in this paper is the following.

\begin{theorem}
\textit{Let }$q\in 
%TCIMACRO{\U{2115} }%
%BeginExpansion
\mathbb{N}
%EndExpansion
^{\ast }\backslash \{1,2\},\  \delta >0,$ $\chi \in C_{M}\{%
%TCIMACRO{\U{211d} }%
%BeginExpansion
\mathbb{R}
%EndExpansion
\}$\textit{\ and }$a_{j}$\textit{\ }$\in O(%
%TCIMACRO{\U{211d} }%
%BeginExpansion
\mathbb{R}
%EndExpansion
_{\delta })$\textit{\ }$(j=1,...,q)$\textit{\ such that }$a_{1}$ \textit{and 
}$a_{q}$\textit{\ are nowhere vanishing on }$%
%TCIMACRO{\U{211d} }%
%BeginExpansion
\mathbb{R}
%EndExpansion
_{\delta }.$\textit{\ We assume that the following growth condition holds }$:
$%
\begin{equation}
\underset{z\in 
%TCIMACRO{\U{211d} }%
%BeginExpansion
\mathbb{R}
%EndExpansion
_{\delta }}{\sup }\left( \left( \sum_{j=2}^{q}\left \vert \frac{a_{j}(z)}{%
a_{1}(z)}\right \vert +\sum_{j=1}^{q-1}\left \vert \frac{a_{j}(z)}{a_{q}(z)}%
\right \vert +\frac{1}{|a_{1}(z)|}+\frac{1}{|a_{q}(z)|}\right) e^{-e^{C|\func{%
Re}z|}}\right) <+\infty   \label{hyp}
\end{equation}%
for some constant $C>0.$ \textit{Then the difference equation }$:$%
\begin{equation*}
(E):\overset{q}{\text{ }\underset{j=1}{\sum }}a_{i}(x)\varphi (x+\alpha
_{i})=\chi (x)
\end{equation*}%
\textit{is solvable in the Carleman class }$C_{M}\{%
%TCIMACRO{\U{211d} }%
%BeginExpansion
\mathbb{R}
%EndExpansion
\}.$
\end{theorem}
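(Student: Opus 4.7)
The plan is to adapt the method of Belitskii, Dyn'kin and Tkachenko, combining the Dyn'kin $\bar\partial$-characterisation of $C_M\{\mathbb{R}\}$ (Theorem 2.1) with an iterative construction based on two one-sided shift recursions extracted from $(E)$. Dividing $(E)$ by $a_1$ and replacing $x$ by $x-\alpha_1$ gives
\begin{equation*}
\varphi(x) = \frac{\chi(x-\alpha_1)}{a_1(x-\alpha_1)} - \sum_{j=2}^{q}\frac{a_j(x-\alpha_1)}{a_1(x-\alpha_1)}\,\varphi(x+\alpha_j-\alpha_1),
\end{equation*}
a recursion in which all shifts $\alpha_j-\alpha_1$ are strictly positive, so that iteration produces a formal Neumann series whose $n$-th term evaluates $\chi$ and the coefficient ratios at points drifting linearly to $+\infty$. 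The symmetric manipulation based on $a_q$ yields the analogous backward recursion whose iterates drift to $-\infty$.

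The datum is then treated as follows. Using Theorem 2.1 I extend $\chi$, localised to a large interval, to a compactly supported $C^{1}$ map $F\colon\mathbb{C}\to\mathbb{C}$ with $|\bar\partial F(z)|\le A\,H_M(B|\operatorname{Im}(z)|)$. The Cauchy--Green representation, applied with integration split between the half-planes $\{\pm\operatorname{Re}\zeta>0\}$, produces a decomposition $F=F^{+}+F^{-}$ in which $F^{\pm}$ is holomorphic on the half-strip $\mathbb{R}_{\delta}^{\mp}$ and decays on it at a rate governed by the Cauchy kernel. I seek $\varphi=\varphi^{+}+\varphi^{-}$ with $\varphi^{-}$ obtained by iterating the forward recursion on $F^{-}$ and $\varphi^{+}$ by the backward recursion on $F^{+}$. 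Once the series converges, term-by-term application of $\bar\partial$ produces a $C^{1}$-extension of $\varphi^{\pm}$ whose Cauchy--Riemann derivative is dominated by a sum of $H_M$-terms that, after possibly shrinking $B$, still satisfies the Dyn'kin estimate. Theorem 2.1 then yields $\varphi\in C_M\{\mathbb{R}\}$, and quasianalyticity of the class stitches together the local solutions obtained for different choices of the interval into a single global solution of $(E)$.

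The main difficulty is the convergence of the Neumann series. The hypothesis (\ref{hyp}) permits the ratios $a_j/a_1$ and $a_j/a_q$ to grow like $\exp(\exp(C|\operatorname{Re} z|))$, so along an iteration of length $n$ the accumulated coefficient products can reach magnitudes of the order $\exp(n\,e^{Cn})$; this must be absorbed by the decay of $F^{\pm}$ far out along the strip. A bare Cauchy kernel produces only $\mathcal{O}(1/|\operatorname{Re} z|)$-decay, which is inadequate. The crux of the proof is therefore to engineer the extension $F$, by an iterated smoothing of the Dyn'kin extension of $\chi$ against Beurling-type weights, so that $F^{\pm}$ decays faster than any iterated exponential along the strip, while still satisfying a $\bar\partial$-estimate of the form $|\bar\partial F(z)|\le A\,H_M(B|\operatorname{Im} z|)$ at the end of the day. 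It is precisely in this trade-off between super-rapid decay at infinity and the Carleman-class bound on $\bar\partial$ that the specific double-exponential form of the growth condition (\ref{hyp}) is essential.
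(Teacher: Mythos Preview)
Your overall strategy---two one-sided Neumann recursions, a Cauchy--Pompeiu splitting of a Dyn'kin extension of $\chi$, and a quasianalyticity argument at the end---matches the paper. But the step you yourself flag as ``the crux'' is not carried out, and the fix you sketch is both vague and aimed at the wrong target.

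You propose to ``engineer'' the extension $F$ by an unspecified ``iterated smoothing against Beurling-type weights'' so that $F^{\pm}$ decays \emph{faster than any iterated exponential} along the half-strip. There is no indication of how such a smoothing would be performed while preserving an estimate $|\bar\partial F(z)|\le A\,H_M(B|\operatorname{Im} z|)$, and in fact super-double-exponential decay is neither needed nor, for a function holomorphic on a strip of fixed width, easy to arrange. What is actually required is double-exponential decay at a rate $C_1$ strictly larger than the constant $C$ in the hypothesis, and the paper achieves this by a concrete multiplier trick: one applies the Cauchy--Pompeiu formula on a small disk $\Delta_\rho$ not to $F$ but to $e^{e^{C_1 z}+e^{-C_1 z}}F(z)$, with $\rho<\pi/(2C_1)$ so that $\operatorname{Re}(e^{\pm C_1 z})\ge \cos(C_1\rho)\,e^{C_1|\operatorname{Re} z|}$ on $\mathbb{R}_\rho^{\pm}$. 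Dividing back by the multiplier yields pieces $\chi_\pm$ that are holomorphic off small arcs, restrict to $C_M\{\mathbb{R}\}$ on the real line, and satisfy $|\chi_\pm(z)|\le D\,e^{-\cos(C_1\rho)\,e^{C_1|\operatorname{Re} z|}}$ on the appropriate half-strip. Since $C_1>C$, this decay dominates the accumulated coefficient growth $e^{nL\,e^{C(|\operatorname{Re} z|+n\beta_q)}}$ along the $n$-th iterate, and the Neumann series converges uniformly on compacta of the strip.

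There is also a simpler route to $C_M$-membership than your term-by-term $\bar\partial$ argument: in the paper's construction each $g_n,h_n$ is already holomorphic on the strip $\mathbb{R}_{\delta_0}$ except for the contribution of finitely many leading terms, so for any bounded rectangle the tail of the series is a uniform limit of holomorphic functions, hence holomorphic, while the finitely many leading terms lie in $C_M\{\mathbb{R}\}$ by construction. No summation of $H_M$-bounds is needed.
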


\section{Proof of the main result}

Let us first prove the following lemma.

\begin{lemma}
\textit{Given }$f\in C_{M}\{%
%TCIMACRO{\U{211d} }%
%BeginExpansion
\mathbb{R}
%EndExpansion
\},$\textit{\ }$C_{0}>0$ \textit{and }$\rho \in $\textit{\ }$\left] 0,\frac{%
\pi }{2C_{0}}\right[ ,$ \textit{there exist two functions}$f_{\pm }:$\textit{%
\ }$(\mathbb{C}\backslash \Delta _{\rho }^{\pm })\cup 
%TCIMACRO{\U{211d} }%
%BeginExpansion
\mathbb{R}
%EndExpansion
\longrightarrow \mathbb{C}$ \textit{which are} \textit{holomorphic on }$%
\mathbb{C}\backslash (\Gamma _{\rho }^{\pm }\cup \Delta _{\rho }^{\pm }),$ 
\textit{whose restrictions to} $%
%TCIMACRO{\U{211d} }%
%BeginExpansion
\mathbb{R}
%EndExpansion
$ \textit{belong to }$C_{M}\{%
%TCIMACRO{\U{211d} }%
%BeginExpansion
\mathbb{R}
%EndExpansion
\},$\textit{and such that the following conditions hold }$:$%
\begin{eqnarray*}
f(x) &=&f_{+}(x)+f_{-}(x),\text{ }x\in \lbrack -\rho ,\rho ] \\
|f_{+}(z)| &\leq &D_{0}e^{-\cos (\rho C_{0})e^{C|\func{Re}(z\text{ })|}},%
\text{ }z\in 
%TCIMACRO{\U{211d} }%
%BeginExpansion
\mathbb{R}
%EndExpansion
_{\rho }^{+} \\
|f_{-}(z)| &\leq &D_{0}e^{-\cos (\rho C_{0})e^{C|\func{Re}(z\text{ })|}},%
\text{ }z\in 
%TCIMACRO{\U{211d} }%
%BeginExpansion
\mathbb{R}
%EndExpansion
_{\rho }^{-}
\end{eqnarray*}%
\textit{where }$D_{0}>0$\textit{\ is a constant.}
\end{lemma}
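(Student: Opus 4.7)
The plan is to combine Dyn'kin's almost-analytic extension (the theorem quoted above) with a Cauchy--Green splitting, using an entire damping factor to produce the prescribed double-exponential decay.

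First, I would apply that theorem to $f$ on the compact interval $I=[-\rho,\rho]$ to obtain a compactly supported extension $F\in C^{1}(\mathbb{C})$ of $f|_{I}$, arranged so that $\operatorname{supp}(F)\subset\overline{\Delta_{\rho}}$ and $|\overline{\partial}F(z)|\le A\,H_{M}(B|\operatorname{Im}(z)|)$ for constants $A,B>0$. I would then introduce the entire, nowhere-vanishing damping factor
\[
E(z):=\exp\bigl(-e^{Cz}-e^{-Cz}\bigr)=\exp\bigl(-2\cosh(Cz)\bigr).
\]
For $z=x+iy$ with $|y|\le\rho$, using that $C\le C_0$ (so that $\cos(Cy)\ge\cos(\rho C_{0})>0$) one has
\[
|E(z)|=\exp\bigl(-2\cosh(Cx)\cos(Cy)\bigr)\le\exp\bigl(-\cos(\rho C_{0})\,e^{C|x|}\bigr),
\]
while $|E(\zeta)|$ and $|1/E(\zeta)|$ are both bounded on $\overline{\Delta_{\rho}}$.

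Since $1/E$ is holomorphic, $\overline{\partial}(F/E)=\overline{\partial}F/E$. Applying the Cauchy--Green formula to the compactly supported function $F/E$ and multiplying by $E(z)$ gives
\[
F(z)=-\frac{E(z)}{\pi}\iint_{\overline{\Delta_{\rho}}}\frac{\overline{\partial}F(\zeta)}{E(\zeta)(\zeta-z)}\,dm(\zeta).
\]
Splitting the integration along $\operatorname{Re}(\zeta)=0$, I would set
\[
f_{\pm}(z):=-\frac{E(z)}{\pi}\iint_{\Delta_{\rho}^{\pm}}\frac{\overline{\partial}F(\zeta)}{E(\zeta)(\zeta-z)}\,dm(\zeta),
\]
which automatically yields $f_{+}(z)+f_{-}(z)=F(z)$ on $\mathbb{C}$, hence $f_{+}+f_{-}=f$ on $[-\rho,\rho]$. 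Each $f_{\pm}$ is the product of the entire function $E$ with a Cauchy transform whose density is supported in $\overline{\Delta_{\rho}^{\pm}}=\Gamma_{\rho}^{\pm}\cup\Delta_{\rho}^{\pm}$, so $f_{\pm}$ is holomorphic on $\mathbb{C}\setminus(\Gamma_{\rho}^{\pm}\cup\Delta_{\rho}^{\pm})$ and extends continuously across $\mathbb{R}$. The decay on $\mathbb{R}_{\rho}^{\pm}$ is then immediate: $|\zeta-z|\ge\rho$ whenever $\zeta\in\Delta_{\rho}^{\pm}$ and $z\in\mathbb{R}_{\rho}^{\pm}$, so the inner integral is uniformly bounded, and the decay comes entirely from the $E(z)$ prefactor estimated above. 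For the membership $f_{\pm}|_{\mathbb{R}}\in C_{M}\{\mathbb{R}\}$ one observes that $\overline{\partial}f_{\pm}(z)=\overline{\partial}F(z)\,\mathbf{1}_{\Delta_{\rho}^{\pm}}(z)$ still satisfies the same $H_{M}$-bound, so the converse direction of the Dyn'kin theorem applies.

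The main obstacle is the choice of the damping factor: $E$ must be entire and nowhere vanishing (so that $1/E$ is entire and the splitting makes sense), must provide double-exponential decay on \emph{both} ends of the strip simultaneously, and must remain bounded above and below on $\operatorname{supp}(\overline{\partial}F)$. The symmetric combination $E(z)=\exp(-2\cosh(Cz))$ meets all three demands, and the assumption $\rho<\pi/(2C_{0})$ is precisely what keeps $\cos(Cy)$ positive and bounded away from zero throughout $\mathbb{R}_{\rho}$. A secondary technical point is arranging $\operatorname{supp}(F)\subset\overline{\Delta_{\rho}}$ without spoiling the $\overline{\partial}$-estimate at $y=0$, since a naive multiplicative cutoff in the $x$-direction would introduce a boundary term proportional to $f$ on the real axis; this must be handled by exploiting the specific form of Dyn'kin's almost-analytic construction rather than a generic cutoff.
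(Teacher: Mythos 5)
Your overall strategy is the same as the paper's: take the Dyn'kin almost-analytic extension $F$ of $f|_{[-\rho,\rho]}$, damp with the entire factor $e^{-2\cosh(C_{0}z)}$ (your $E$), represent $F$ by a Cauchy-type formula, and split the integration along $\operatorname{Re}\zeta=0$ into the two half-disks; your decay estimate for $E$ on the strip and the lower bound $|\zeta-z|\ge\rho$ for $z\in\mathbb{R}_{\rho}^{\pm}$ are exactly the ingredients the paper uses. The one genuine gap is the step you call a ``secondary technical point'': you drop the boundary term by assuming the extension can be ``arranged so that $\operatorname{supp}(F)\subset\overline{\Delta_{\rho}}$''. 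This is not merely delicate, it is impossible in general: any continuous extension of $f|_{[-\rho,\rho]}$ supported in $\overline{\Delta_{\rho}}$ must vanish at the boundary points $\pm\rho$ (it is $0$ just outside the closed disk, hence at $\pm\rho$ by continuity), while it must also equal $f(\pm\rho)$, which is generically nonzero. So no refinement of Dyn'kin's construction can deliver the support condition your whole-plane Cauchy--Green identity $f_{+}+f_{-}=F$ requires, and without it your definition of $f_{\pm}$ no longer reproduces $f$ on $[-\rho,\rho]$.

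The repair is exactly what the paper does: instead of the compactly supported Cauchy--Green formula on $\mathbb{C}$, apply the Cauchy--Pompeiu formula on the disk $\Delta_{\rho}$ to $e^{2\cosh(C_{0}\zeta)}F(\zeta)$, so that each $f_{\pm}$ consists of your area integral over $\Delta_{\rho}^{\pm}$ \emph{plus} a contour integral over $\Gamma_{\rho}^{\pm}$, all multiplied by $E(z)$. The boundary pieces are holomorphic off $\Gamma_{\rho}^{\pm}$, are bounded on $\mathbb{R}_{\rho}^{\pm}$ by a constant (since $|\zeta-z|\ge\rho$ there and $e^{2\cosh(C_{0}\zeta)}F(\zeta)$ is bounded on the compact circle), so your decay and holomorphy arguments go through verbatim, and $f_{+}+f_{-}=F=f$ on $[-\rho,\rho]$ needs no support hypothesis. (Alternatively you could keep your no-boundary-term formula by putting $\operatorname{supp}F$ inside a slightly larger disk $\Delta_{\rho'}$ with $\rho<\rho'<\pi/(2C_{0})$, but then you prove the lemma with $\rho'$-half-disks rather than the stated one.) Two smaller remarks: your reading of the constant in the exponent ($C\le C_{0}$) is a reasonable interpretation of a typo in the statement, where the paper's own proof uses $C_{0}$; and your appeal to the converse of Dyn'kin's theorem for $f_{\pm}|_{\mathbb{R}}\in C_{M}\{\mathbb{R}\}$ glosses over the fact that $f_{\pm}$ is neither compactly supported nor obviously $C^{1}$, so a localization is needed there --- but the paper is equally terse on this point, so I do not count it against you.
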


\begin{proof}
Since $f$ belongs to $C_{M}\{%
%TCIMACRO{\U{211d} }%
%BeginExpansion
\mathbb{R}
%EndExpansion
\}$ there exists, according to Dyn'kin's theorem (\cite{DYN}), a compactly
supported function $F:%
%TCIMACRO{\U{2102} }%
%BeginExpansion
\mathbb{C}
%EndExpansion
\rightarrow 
%TCIMACRO{\U{2102} }%
%BeginExpansion
\mathbb{C}
%EndExpansion
$ of class $C^{1}$ such that $F$ is an extension of the restriction of $f$
to the interval $[-\rho ,\rho ]$ and satisfies the following estimate :%
\begin{equation*}
|\overline{\partial }F(z)|\leq AH_{M}(B|\func{Im}(z)|),\text{ }z\in 
%TCIMACRO{\U{2102}}%
%BeginExpansion
\mathbb{C}%
%EndExpansion
\end{equation*}%
where $A,$ $B>0$ are constants. Following the same approach as that of (\cite%
{BEL}, pages $34,35),$ but using the Cauchy-Pompeiu formula on the disk $%
\Delta _{\rho },$ for the function $e^{e^{C\text{ }_{0}z}+e^{-C_{0}\text{ }%
z}}f(z),$ we show that the functions :%
\begin{equation*}
\left \{ 
\begin{array}{c}
\begin{array}{c}
f_{+}(z)=\frac{1}{2i\pi }e^{-e^{C_{0}\text{ }z}-e^{-C_{0}z}}\int_{\Gamma
_{\rho }^{+}}\frac{e^{e^{C_{0}\zeta }+e^{-C_{0}\zeta }}F(\zeta )}{\zeta -z}%
d\zeta - \\ 
-\frac{1}{\pi }e^{-e^{C\text{ }z}-e^{-C\text{ }z}}\iint_{\Delta _{\rho }^{+}}%
\frac{e^{e^{C\text{ }_{0}\zeta }+e^{-C_{0}\zeta }}\overline{\partial }%
F(\zeta )}{\zeta -z}dm(\zeta )%
\end{array}
\\ 
f_{-}(z)=\frac{1}{2i\pi }e^{-e^{C_{0}\text{ }z}-e^{-C_{0}\text{ }%
z}}\int_{\Gamma _{\rho }^{-}}\frac{e^{e^{C_{0}\zeta }+e^{-C_{0}\zeta
}}F(\zeta )}{\zeta -z}d\zeta - \\ 
-\frac{1}{\pi }e^{-e^{C_{0}\text{ }z}-e^{-C_{0}\text{ }z}}\iint_{\Delta
_{\rho }^{-}}\frac{e^{e^{C_{0}\text{ }\zeta }+e^{-C_{0}\zeta }}\overline{%
\partial }F(\zeta )}{\zeta -z}dm(\zeta )%
\end{array}%
\right.
\end{equation*}%
satisfy the required conditions.\ 
\end{proof}

Now we set :%
\begin{equation*}
\left \{ 
\begin{array}{c}
\beta _{j}:=\alpha _{j}-\alpha _{1},\text{ }j=2,...,q \\ 
b_{j}(z):=-\frac{a_{j}(z)}{a_{1}(z)},\text{ }z\in 
%TCIMACRO{\U{211d} }%
%BeginExpansion
\mathbb{R}
%EndExpansion
_{\delta },\text{ }j=2,...,q \\ 
\gamma _{j}:=\alpha _{q}-\alpha _{j},\text{ }j=1,...,q-1 \\ 
c_{j}(z):=-\frac{a_{j}(z)}{a_{q}(z)},\text{ }z\in 
%TCIMACRO{\U{211d} }%
%BeginExpansion
\mathbb{R}
%EndExpansion
_{\delta },\text{ }j=1,...,q-1%
\end{array}%
\right.
\end{equation*}%
Let $C_{1}>C$ \ and $\delta _{0}\in \left] 0,\min \left( \delta ,\frac{\pi }{%
2C_{1}}\right) \right[ .$ Then according to the lemma above, there exist
onstant $D_{1}>0$ and two functions $\chi _{\pm }:(%
%TCIMACRO{\U{2102} }%
%BeginExpansion
\mathbb{C}
%EndExpansion
\backslash \Delta _{\delta _{0}}^{\pm })\cup 
%TCIMACRO{\U{211d} }%
%BeginExpansion
\mathbb{R}
%EndExpansion
\longrightarrow 
%TCIMACRO{\U{2102} }%
%BeginExpansion
\mathbb{C}
%EndExpansion
$ \ which are holomorphic on $%
%TCIMACRO{\U{2102} }%
%BeginExpansion
\mathbb{C}
%EndExpansion
\backslash (\Gamma _{\delta _{0}}^{\pm }\cup \Delta _{\delta _{0}}^{\pm }),$
whose restrictions to $%
%TCIMACRO{\U{211d} }%
%BeginExpansion
\mathbb{R}
%EndExpansion
$ belong to $C_{M}\{%
%TCIMACRO{\U{211d} }%
%BeginExpansion
\mathbb{R}
%EndExpansion
\},$ and such that the following conditions hold :%
\begin{equation}
\left \{ 
\begin{array}{c}
\chi (x)=\chi _{+}(x)+\chi _{-}(x),\text{ }x\in \lbrack -\delta _{0},\delta
_{0}] \\ 
|\chi _{+}(z)|\leq D_{1}e^{-\cos (C_{1}\delta _{0})e^{C_{1}|\func{Re}(z\text{
})|}},\text{ }z\in 
%TCIMACRO{\U{211d} }%
%BeginExpansion
\mathbb{R}
%EndExpansion
_{\delta _{0}}^{+} \\ 
|\chi _{-}(z)|\leq D_{1}e^{-\cos (C_{1}\delta _{0})e^{C_{1}|\func{Re}(z\text{
})|}},\text{ }z\in 
%TCIMACRO{\U{211d} }%
%BeginExpansion
\mathbb{R}
%EndExpansion
_{\delta _{0}}^{-}\text{ }%
\end{array}%
\right.  \label{SPLIT}
\end{equation}%
Let $(g_{n})_{n\in 
%TCIMACRO{\U{2115} }%
%BeginExpansion
\mathbb{N}
%EndExpansion
\text{ }}$and $(h_{n})_{n\in 
%TCIMACRO{\U{2115} }%
%BeginExpansion
\mathbb{N}
%EndExpansion
\text{ }}$ be the sequences of complex valued functions defined on the strip 
$%
%TCIMACRO{\U{211d} }%
%BeginExpansion
\mathbb{R}
%EndExpansion
_{\delta _{0}}$ by the formulas :%
\begin{equation}
\left \{ 
\begin{array}{c}
g_{_{0}}(z):=\frac{\chi _{+}(z)}{a_{1}(z)},\text{ }g_{n+1}(z):=\underset{j=2}%
{\overset{q}{\sum }}b_{j}(z)g_{n}(z+\beta _{j}) \\ 
h_{_{0}}(z):=\frac{\chi _{-}(z)}{a_{q}(z)},\text{ }h_{n+1}(z):=\overset{q-1}{%
\underset{j=1}{\sum }}c_{j}(z)h_{n}(z-\gamma _{j})%
\end{array}%
\right.  \notag
\end{equation}%
It is clear that all the functions $g_{n|%
%TCIMACRO{\U{211d} }%
%BeginExpansion
\mathbb{R}
%EndExpansion
}$ and $h_{n|%
%TCIMACRO{\U{211d} }%
%BeginExpansion
\mathbb{R}
%EndExpansion
}$ belong to $C_{M}$ $(%
%TCIMACRO{\U{211d} }%
%BeginExpansion
\mathbb{R}
%EndExpansion
).$ Let us set :\  \  \  \  \  \  \  \  \  \  \  \  \  \  \  \  \  \  \  \  \  \  \  \  \  \  \  \  \  \
\  \  \  \  \  \  \  \  \  \  \  \  \  \  \  \  \  \  \  \  \  \  \  \  \  \  \  \  \  \  \  \  \  \  \  \  \ 
\begin{equation*}
\left \{ 
\begin{array}{c}
K_{1}:=\{ \beta _{j}:j\in 2,...,q\} \\ 
K_{2}:=\{ \gamma _{j}:j\in 1,...,q-1\}%
\end{array}%
\right.
\end{equation*}%
It follows from (\ref{hyp}) that we have for evey $n\in 
%TCIMACRO{\U{2115} }%
%BeginExpansion
\mathbb{N}
%EndExpansion
$ $,$ $z\in 
%TCIMACRO{\U{211d} }%
%BeginExpansion
\mathbb{R}
%EndExpansion
_{\delta _{0}}:$

\begin{equation*}
\left \{ 
\begin{array}{c}
|g_{n+1}(z)|\text{ }\leq e^{Le^{C|\func{Re}(z\text{ })|}}\underset{u\text{ }%
\in \text{ }z\text{ }+K}{\max }|g_{n}(u)| \\ 
|h_{n+1}(z)|\text{ }\leq e^{Le^{C|\func{Re}(z\text{ })|}}\underset{u\text{ }%
\in \text{ }z\text{ }-K}{\max }|h_{n}(u)|%
\end{array}%
\right.
\end{equation*}%
where $L>1$\ is a constant. Thence we have for all $n\in 
%TCIMACRO{\U{2115} }%
%BeginExpansion
\mathbb{N}
%EndExpansion
^{\ast },$\ $z\in 
%TCIMACRO{\U{211d} }%
%BeginExpansion
\mathbb{R}
%EndExpansion
_{\delta _{0}}:$%
\begin{equation*}
\left \{ 
\begin{array}{c}
\begin{array}{c}
g_{n}(z)|\text{ }\leq e^{\underset{j=0}{\overset{n-1}{\sum }}Le^{C\text{ }(|%
\func{Re}(z\text{ })|+j\beta _{q})}}\underset{u\text{ }\in \text{ }z\text{ }%
\mathbf{+}K_{1}^{(n)}}{\max }|g_{\text{ }0}(u)| \\ 
\leq \text{ }e^{nLe^{C\text{ }(|\func{Re}(z\text{ })|+n\text{ }\beta _{q})}}%
\underset{u\text{ }\in \text{ }z\text{ }\mathbf{+}K_{1}^{(n)}}{\max }|\chi _{%
\text{ }+}(u)|%
\end{array}
\\ 
|h_{n}(z)|\text{ }\leq e^{\underset{j=0}{\overset{n-1}{\sum L}}e^{C\text{ }(|%
\func{Re}(z\text{ })|+j\gamma _{1})}}\underset{u\text{ }\in \text{ }z\text{ }%
\mathbf{-}K_{2}^{(n)}}{\max }|h_{\text{ }0}(u)| \\ 
\leq \text{ }e^{nLe^{C\text{ }(|\func{Re}(z\text{ })|+n\text{ }\gamma _{1})}}%
\underset{u\text{ }\in \text{ }z\text{ }\mathbf{-}K_{2}^{(n)}}{\max }|\chi _{%
\text{ }-}(u)|%
\end{array}%
\right.
\end{equation*}%
Let $a>0.$\ There exists $N_{a}\in 
%TCIMACRO{\U{2115} }%
%BeginExpansion
\mathbb{N}
%EndExpansion
^{\ast }$\ such that $(\beta _{2}+\gamma _{q-1})N_{a}\geq a$\ and :%
\begin{equation*}
\left \{ 
\begin{array}{c}
z+K_{1}^{(n)}\subset 
%TCIMACRO{\U{211d} }%
%BeginExpansion
\mathbb{R}
%EndExpansion
_{\delta _{0}}^{+},\text{ }n\geq N_{a},\text{ }z\in 
%TCIMACRO{\U{211d} }%
%BeginExpansion
\mathbb{R}
%EndExpansion
_{\delta _{0},a} \\ 
z-K_{2}^{(n)}\subset 
%TCIMACRO{\U{211d} }%
%BeginExpansion
\mathbb{R}
%EndExpansion
_{\delta _{0}}^{-},\text{ }n\geq N_{a},\text{ }z\in 
%TCIMACRO{\U{211d} }%
%BeginExpansion
\mathbb{R}
%EndExpansion
_{\delta _{0},a}%
\end{array}%
\right.
\end{equation*}%
It follows then from (\ref{split}) that we have for all $n\geq N_{a},$\ $%
z\in 
%TCIMACRO{\U{211d} }%
%BeginExpansion
\mathbb{R}
%EndExpansion
_{\delta _{0},a}:$%
\begin{equation*}
\left \{ 
\begin{array}{c}
\begin{array}{c}
\underset{u\text{ }\in \text{ }z\text{ }\mathbf{+}K_{1}^{(n)}}{\max }|\chi _{%
\text{ }+}(u)|\leq D_{1}e^{-\cos (C_{1}\delta _{0})e^{C_{1}\underset{u\text{ 
}\in \text{ }z\text{ }\mathbf{+}K_{1}^{(n)}}{\min }|\func{Re}(u\text{ })|}}
\\ 
\leq D_{1}e^{-\cos (C_{1}\delta _{0})e^{\text{ }C_{1}(-a\text{ }+n\text{ }%
\beta _{2})}}%
\end{array}
\\ 
\underset{u\text{ }\in \text{ }z\text{ }\mathbf{-}K_{2}^{(n)}}{\max }|\chi _{%
\text{ }-}(u)|\leq D_{1}e^{-\cos (C_{1}\delta _{0})e^{C_{1}\underset{u\text{ 
}\in \text{ }z\text{ }\mathbf{-}K_{2}^{(n)}}{\min }|\func{Re}(u)|}} \\ 
\leq D_{1}e^{-\cos (C_{1}\delta _{0})e^{C_{1}(-a\text{ }+n\text{ }\gamma
_{q-1})}}%
\end{array}%
\right.
\end{equation*}%
Consequently we have for all $n\geq N_{a},$\ $z\in 
%TCIMACRO{\U{211d} }%
%BeginExpansion
\mathbb{R}
%EndExpansion
_{\delta _{0},a}:$ 
\begin{equation*}
\left \{ 
\begin{array}{c}
|g_{n}(z)|\leq D_{1}e^{nLe^{C\text{ }(a\text{ }+n\text{ }\beta _{q})}\text{ }%
]-\cos (C_{1}\delta _{0})e^{C_{1}(-a\text{ }+n\text{ }\beta _{2})}} \\ 
|h_{n}(z)|\leq D_{1}e^{nLe^{C\text{ }(a\text{ }+n\text{ }\gamma _{1})}\text{ 
}]--\cos (C_{1}\delta _{0})e^{C_{1}(-a\text{ }+n\text{ }\gamma _{q-1})|}}%
\end{array}%
\right.
\end{equation*}%
On the other hand we have :%
\begin{equation*}
\left \{ 
\begin{array}{c}
nLe^{C\text{ }(a\text{ }+n\text{ }\beta _{q})}=\underset{n\rightarrow
+\infty }{o}\left( \cos (C_{1}\delta _{0})e^{\text{ }C_{1}(-a\text{ }+n\text{
}\beta _{2})}\right) \\ 
nLe^{C\text{ }(a\text{ }+n\text{ }\gamma _{1})}=\underset{n\rightarrow
+\infty }{o}\left( \cos (C_{1}\delta _{0})e^{\text{ }C_{1}(-a\text{ }+n\text{
}\gamma _{q})|}\right)%
\end{array}%
\right.
\end{equation*}%
Thence there exist real constants $D_{a}>0$\ and $E_{a}>0$\ and an integer $%
P_{a}\geq N_{a}$\ such that the following inequalities hold :%
\begin{equation*}
\left \{ 
\begin{array}{c}
|g_{n}(z)|\text{ }\leq D_{a}e^{-E_{a}e^{\text{ }C_{1}(-a\text{ }+n\text{ }%
\beta _{2})}},\text{ }z\in 
%TCIMACRO{\U{211d} }%
%BeginExpansion
\mathbb{R}
%EndExpansion
_{\delta _{0},a}\text{ },\text{ }n\geq P_{a} \\ 
|h_{n}(z)|\text{ }\leq D_{a}e^{-E_{a}e^{\text{ }C_{1}(-a\text{ }+n\text{ }%
\gamma _{q})|}},\text{ }z\in 
%TCIMACRO{\U{211d} }%
%BeginExpansion
\mathbb{R}
%EndExpansion
_{\delta _{0},a}\text{ },\text{ }n\geq P_{a}%
\end{array}%
\right.
\end{equation*}%
It follows that the function series $\sum g_{n|%
%TCIMACRO{\U{211d} }%
%BeginExpansion
\mathbb{R}
%EndExpansion
_{\delta _{0}}}$ and $\sum h_{n|%
%TCIMACRO{\U{211d} }%
%BeginExpansion
\mathbb{R}
%EndExpansion
_{\delta _{0}}}$ are uniformly convergent on every compact subset of $%
%TCIMACRO{\U{211d} }%
%BeginExpansion
\mathbb{R}
%EndExpansion
_{\delta _{0}}$ and that the functions $\overset{+\infty }{\underset{n=N_{a}}%
{\sum }}g_{n}\ $and $\overset{+\infty }{\underset{n=N_{a}}{\sum }}h_{n}$ are
holomorphic on $%
%TCIMACRO{\U{211d} }%
%BeginExpansion
\mathbb{R}
%EndExpansion
_{\delta _{0},a}$ for every $a>0.$ Let $G_{+}$ and $G_{-}$ be respectively
the sums of $\sum g_{n|%
%TCIMACRO{\U{211d} }%
%BeginExpansion
\mathbb{R}
%EndExpansion
_{\delta _{0}}}$ and $\sum h_{n|%
%TCIMACRO{\U{211d} }%
%BeginExpansion
\mathbb{R}
%EndExpansion
_{\delta _{0}}}.$ Since all the functions $g_{n|%
%TCIMACRO{\U{211d} }%
%BeginExpansion
\mathbb{R}
%EndExpansion
}$ and $h_{n|%
%TCIMACRO{\U{211d} }%
%BeginExpansion
\mathbb{R}
%EndExpansion
}$ belong to $C_{M}\{%
%TCIMACRO{\U{211d} }%
%BeginExpansion
\mathbb{R}
%EndExpansion
\},$ it follows then that the functions $g_{+}:=G_{+}|_{%
%TCIMACRO{\U{211d} }%
%BeginExpansion
\mathbb{R}
%EndExpansion
}$ and $g_{-}:=G_{-}|_{%
%TCIMACRO{\U{211d} }%
%BeginExpansion
\mathbb{R}
%EndExpansion
}$ belong to $C_{M}\{%
%TCIMACRO{\U{211d} }%
%BeginExpansion
\mathbb{R}
%EndExpansion
\}.$ Elementary computations show that : 
\begin{equation*}
\left \{ 
\begin{array}{c}
\underset{j=1}{\overset{q}{\sum }}a_{j}(x)g_{+}(x+\alpha _{j})=\chi _{+}(x),%
\text{ }x\in 
%TCIMACRO{\U{211d} }%
%BeginExpansion
\mathbb{R}
%EndExpansion
\\ 
\underset{j=1}{\overset{q}{\sum }}a_{j}(x)g_{-}(x+\alpha _{j})=\chi _{-}(x),%
\text{ }x\in 
%TCIMACRO{\U{211d}}%
%BeginExpansion
\mathbb{R}%
%EndExpansion
\end{array}%
\right.
\end{equation*}%
Then it follows from (\ref{SPLIT}) that the function $g:=g_{+}+g_{-}$ is a
solution on the interval $[-\delta _{0},\delta _{0}]$ of the difference
equation $(E$ $).$ But the function $x\mapsto \underset{j=1}{\overset{q}{%
\sum }}a_{j}(x)g(x+\alpha _{j})-\chi $ $(x)$ belongs to the quasianalytic
Carleman class $C_{M}\{%
%TCIMACRO{\U{211d} }%
%BeginExpansion
\mathbb{R}
%EndExpansion
\}.$ Consequently the function $g\in C_{M}\{%
%TCIMACRO{\U{211d} }%
%BeginExpansion
\mathbb{R}
%EndExpansion
\}$ is a solution on $%
%TCIMACRO{\U{211d} }%
%BeginExpansion
\mathbb{R}
%EndExpansion
$ of the difference equation $(E).$ Thence the proof of the main result is
complete.

$\square $

\begin{center}
\bigskip

\bigskip \  \  \  \  \  \  \  \  \ 
\end{center}

\end{document}